\providecommand{\U}[1]{\protect \rule{.1in}{.1in}}
\theoremstyle{plain}
\newtheorem{corollary}{Corollary}
\newtheorem{lemma}{Lemma}
\newtheorem{remark}{Remark}
\newtheorem{theorem}{Theorem}
\numberwithin{equation}{section}
\begin{document}
\title[Lifting Normals]{Lifting Normal Elements in Nonseparable Calkin Algebras}
\author{Ye Zhang}
\address{University of New Hampshire}
\email{yjg2@unh.edu}
\author{Don Hadwin}
\address{University of New Hampshire}
\email{don@unh.edu}
\urladdr{http://euclid.unh.edu/\symbol{126}don}
\author{Yanni Chen}
\address{University of New Hampshire}
\email{yet2@unh.edu}
\thanks{}
\thanks{Supported in part by a grant from the Simons Foundation}
\subjclass[2010]{ Primary 46L05, 46L10 Secondary 47C15, 47L20}
\keywords{Calkin algebra, nonseparable Hilbert space, distance to the normal operators,
countably cofinal cardinal.}

\begin{abstract}
We use the remarkable distance estimate of Ilya Kachkovskiy and Yuri Safarov,
\cite{KS} to show that if $H$ is a nonseparable Hilbert space and
$\mathcal{K}$ is any closed ideal in $B\left(  H\right)  $ that is not the
ideal of compact operators, then any normal element of $B\left(  H\right)
/\mathcal{K}$ can be lifted to a normal element of $B\left(  H\right)  $.

\end{abstract}
\maketitle

\bigskip

Suppose $H$ is a Hilbert space with $\dim H=d\geq \aleph_{0}$. We let $B\left(
H\right)  $ denote the set of all (bounded linear) operators on $H$. For each
cardinal $m$ with $\aleph_{0}\leq m\leq d,$ we let
\[
\mathcal{F}_{m}\left(  H\right)  =\left \{  T\in B\left(  H\right)
:rankT=\dim \overline{T\left(  H\right)  }<m\right \}  ,
\]
and let $\mathcal{K}_{m}\left(  H\right)  $ be the norm closure of
$\mathcal{F}_{m}\left(  H\right)  $. The set $\left \{  \mathcal{K}_{m}\left(
H\right)  :\aleph_{0}\leq m\leq d\right \}  $ is the collection of all nonzero
proper norm-closed ideals of $B\left(  H\right)  $. The quotient C*-algebra
$\mathcal{C}_{m}\left(  H\right)  =B\left(  H\right)  /\mathcal{K}_{m}\left(
H\right)  $ is called the $m$\emph{-Calkin algebra}, and the quotient map
$\pi_{m}:B\left(  H\right)  \rightarrow \mathcal{C}_{m}\left(  H\right)  $ is
called the $m$\emph{-Calkin map.}

The properties of these Calkin algebras depends significantly on the
properties of the cardinal $m$. We say that $m$ is \emph{countably cofinal} if
$m$ is the supremum of a countable collection of smaller cardinals. It was
shown in \cite[Thm. 4.11]{H} that if $m$ is \emph{not} countably cofinal
(which holds exactly when $\mathcal{K}_{m}\left(  H\right)  =\mathcal{F}%
_{m}\left(  H\right)  $), then, for any separable unital C*-subalgebra
$\mathcal{A}$ of $\mathcal{C}_{m}\left(  H\right)  $, there is a unital $\ast
$-homomorphism $\rho:\mathcal{A}\rightarrow B\left(  H\right)  $ such that
$\pi_{m}\circ \rho$ is the identity on $\mathcal{A}$. Hence, in this case, for
any element $t$ of $\mathcal{C}_{m}\left(  H\right)  $ there is a $T\in
B\left(  H\right)  $ so that $\pi_{m}:C^{\ast}\left(  T\right)  \rightarrow
C^{\ast}\left(  t\right)  $ is a $\ast$-isomorphism sending $T$ to $t$. Hence
$t$ lifts to an element $T$ in $B\left(  H\right)  $ that shares all the
properties preserved under $\ast$-isomorphisms, i.e., being normal, subnormal,
hyponormal, isometric, or unitary.

When the cardinal $m$ is countably cofinal, the situation is not so clear cut.
When $m=\aleph_{0}$, the classical result of L. G. Brown, R. G. Douglas, and
P. A. Fillmore \cite{BDF} shows that the Fredholm index is the only
obstruction to lifting normal elements. When $m>\aleph_{0}$, it is still
possible to lift invertibles and unitaries, but the case of normality has
remained open since 1981. In this paper we prove that when $m>\aleph_{0}$ is
countably cofinal, then, for any normal element $t\in \mathcal{C}_{m}\left(
H\right)  $ there is a normal operator $T\in B\left(  H\right)  $ such that
$\pi_{m}\left(  T\right)  =t$.

A key ingredient in our work is the wonderful new theorem (the
\emph{Kachkovskiy-Safarov inequality}) of \cite{KS}, which estimates the
distance $d\left(  a,\mathcal{N}_{f}\left(  \mathcal{A}\right)  \right)  $ of
an element of $a$ in a C*-algebra $\mathcal{A}$ with real rank zero to the set
$\mathcal{N}_{f}\left(  \mathcal{A}\right)  $ of normal operators in
$\mathcal{A}$ with finite spectrum:%
\begin{equation}
d\left(  a,\mathcal{N}_{f}\left(  \mathcal{A}\right)  \right)  \leq C\left(
\left \Vert a^{\ast}a-aa^{\ast}\right \Vert ^{1/2}+d_{1}\left(  a\right)
\right)  , \tag{\#}%
\end{equation}
for some universal constant $C,$ and where
\[
d_{1}\left(  a\right)  =\sup_{\lambda \in \mathbb{C}}dist\left(  a-\lambda
,GL_{0}\left(  \mathcal{A}\right)  \right)  ,
\]
where $GL_{0}\left(  \mathcal{A}\right)  $ is the connected component of $1$
in the group $GL\left(  \mathcal{A}\right)  $ of invertible elements of
$\mathcal{A}$.

When $\mathcal{A}$ is a von Neumann algebra, $GL_{0}\left(  \mathcal{A}%
\right)  =GL\left(  \mathcal{A}\right)  $, so
\[
d_{1}\left(  a\right)  =\sup_{\lambda \in \mathbb{C}}dist\left(  a-\lambda
,GL\left(  \mathcal{A}\right)  \right)
\]
A nice formula for the $\sup dist\left(  b,GL\left(  \mathcal{A}\right)
\right)  $ is given by C. L. Olsen in \cite{O} and more general results appear
in the works of R. Bouldin \cite{B1},\cite{B2}. We will not need these
characterizations here. If $H$ is an infinite-dimensional Hilbert space, then
$H$ is isomorphic to $H\oplus H\oplus \cdots$ , so if $T\in B\left(  H\right)
$, we can identify $T^{\left(  \infty \right)  }=T\oplus T\oplus \cdots$ with an
operator in $B\left(  H\right)  $. Similarly, if $\mathcal{A}$ is an infinite
von Neumann algebra, then there is an orthogonal sequence $\left \{
P_{n}\right \}  $ of projections summing to $1$ so that each $P_{n}$ is
Murray-von Neumann equivalent to $1,$ so if $T\in \mathcal{A}$, we can still
view $T^{\left(  \infty \right)  }$ as an element of $\mathcal{A}$.

\begin{lemma}
\label{T infinity}Suppose $\mathcal{A}$ is an infinite von Neumann algebra
acting on a separable Hilbert space and $T\in \mathcal{A}$. Then

\begin{enumerate}
\item If $\mathcal{A}$ is of type III, then
\[
d_{1}\left(  T\right)  \leq \left \Vert T^{\ast}T-TT^{\ast}\right \Vert ^{1/2},
\]
and%
\[
d\left(  T,\mathcal{N}_{f}\left(  \mathcal{A}\right)  \right)  \leq
2C\left \Vert T^{\ast}T-TT^{\ast}\right \Vert ^{1/2}.
\]

\item If $\mathcal{A}$ is of type I$_{\infty}$ or type II$_{\infty}$, then
\[
d_{1}\left(  T^{\left(  \infty \right)  }\right)  \leq \left \Vert T^{\ast
}T-TT^{\ast}\right \Vert ^{1/2},
\]
and%
\[
d\left(  T^{\left(  \infty \right)  },\mathcal{N}_{f}\right)  \leq2C\left \Vert
T^{\ast}T-TT^{\ast}\right \Vert ^{1/2}.
\]

\end{enumerate}
\end{lemma}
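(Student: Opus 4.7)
The plan is to reduce both distance bounds to bounds on $d_{1}$ via the Kachkovskiy--Safarov inequality (\#), then bound $d_{1}$ using the polar decomposition and a Powers--Stormer-type inequality. The reduction is immediate: for part~(1) apply (\#) to $T$, for part~(2) to $T^{(\infty)}$ (noting $\|(T^{(\infty)})^{\ast}T^{(\infty)}-T^{(\infty)}(T^{(\infty)})^{\ast}\|=\|T^{\ast}T-TT^{\ast}\|$); once $d_{1}\le\|T^{\ast}T-TT^{\ast}\|^{1/2}=:\epsilon$ is known, the right-hand side of (\#) is bounded by $2C\epsilon$. So the essential task is to show $dist(S,GL(\mathcal{A}))\le\epsilon$ for every $\lambda\in\mathbb{C}$, with $S:=T-\lambda$ in part~(1) and $S:=T^{(\infty)}-\lambda$ in part~(2); note $\|S^{\ast}S-SS^{\ast}\|=\|T^{\ast}T-TT^{\ast}\|$ in both cases.

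I would write the polar decomposition $S=V|S|$ in $\mathcal{A}$ and apply the operator-norm Powers--Stormer inequality $\|\sqrt{A}-\sqrt{B}\|^{2}\le\|A-B\|$ for positive $A,B$ to the identity $|S|^{2}-|S^{\ast}|^{2}=S^{\ast}S-SS^{\ast}$ to obtain $\||S|-|S^{\ast}|\|\le\epsilon$. Since $|S^{\ast}|=V|S|V^{\ast}$ is supported on $VV^{\ast}H$ while $|S|$ is supported on $V^{\ast}VH$, this also yields the sharper estimates $\||S|(1-VV^{\ast})\|\le\epsilon$ and, symmetrically, $\||S^{\ast}|(1-V^{\ast}V)\|\le\epsilon$. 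In the generic case, when $1-V^{\ast}V$ and $1-VV^{\ast}$ are Murray--von Neumann equivalent in $\mathcal{A}$, I would choose a partial isometry $W\in\mathcal{A}$ with $W^{\ast}W=1-V^{\ast}V$ and $WW^{\ast}=1-VV^{\ast}$, set $U:=V+W$ (a unitary in $\mathcal{A}$), and observe that the invertible element $U(|S|+\epsilon)$ satisfies $\|U(|S|+\epsilon)-S\|=\epsilon$, because $W|S|=0$ (the initial projection of $W$ is orthogonal to the support of $|S|$). The equivalence of complement projections that this construction needs is provided by the structural hypotheses: in case~(1), any two nonzero projections with equal central cover in a type~III algebra on a separable Hilbert space are Murray--von Neumann equivalent; in case~(2), the amplification $T\to T^{(\infty)}$ promotes the relevant complements to properly infinite projections of matching class in $\mathcal{A}$.

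The hard part will be the exceptional cases in which $V$ is a proper isometry or co-isometry, so that exactly one of $1-V^{\ast}V$, $1-VV^{\ast}$ vanishes and no unitary extension of $V$ exists in $\mathcal{A}$. In these cases the sharper bounds $\||S|(1-VV^{\ast})\|\le\epsilon$ or $\||S^{\ast}|(1-V^{\ast}V)\|\le\epsilon$ become indispensable: they permit a perturbation of $S$ of norm at most $\epsilon$, built using an isometry supplied by the infinite structure of $\mathcal{A}$ (every nonzero projection being equivalent to $1$ in a type~III factor on separable Hilbert space, or of sufficient rank/trace in the amplified I$_{\infty}$ or II$_{\infty}$ setting), to yield an invertible element of $\mathcal{A}$. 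Verifying invertibility of this carefully chosen perturbation while keeping its norm distance to $S$ within $\epsilon$ is the technical crux, and is precisely where the Powers--Stormer input enters decisively.
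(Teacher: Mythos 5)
Your reduction to bounding $\mathrm{dist}(S,GL(\mathcal{A}))$ for $S=T-\lambda$ (or $T^{(\infty)}-\lambda$) matches the paper, and your ``generic case'' is correct — indeed when $1-V^{\ast}V\sim 1-VV^{\ast}$ your construction gives distance $0$, so the commutator plays no role there. But the proof stops exactly where the lemma's content begins: the case of inequivalent defect projections is the only case in which the bound $\Vert T^{\ast}T-TT^{\ast}\Vert^{1/2}$ matters, and there you only assert that a perturbation ``built using an isometry supplied by the infinite structure'' can be ``carefully chosen'' to be invertible within $\epsilon$; that is the statement to be proved, not a proof. Moreover, your description of the exceptional situation is too narrow outside factors: with nontrivial center both defects can be nonzero yet inequivalent, since equivalence (even of properly infinite projections after amplification) requires equal central covers, which need not hold — e.g.\ $S=V\oplus V^{\ast}$ in $\mathcal{B}\oplus\mathcal{B}$ with $V$ a non-unitary isometry has defect projections with central covers $0\oplus 1$ and $1\oplus 0$. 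So the dichotomy ``unitary phase exists / $V$ is a proper isometry or co-isometry'' does not exhaust the cases, and the appeal to ``equal central cover'' is precisely what can fail. (In fairness, your sharper estimate $\Vert\,|S|(1-VV^{\ast})\Vert\le\epsilon$ could finish the factor case: if $V$ is an isometry with $Q=1-VV^{\ast}\neq 0$, then $SQ^{\perp}$ is within $\epsilon$ of $S$ and has both defects $\geq Q$, hence equivalent in a countably decomposable type III factor; but you did not carry this out, and it does not settle the non-factor case.)

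The paper closes the gap differently. After noting one may assume $T^{\ast}T$ is not invertible (swap $T$ and $T^{\ast}$ if necessary; if both are invertible the claim is trivial), it takes the spectral projection $P_{\varepsilon}$ of $T^{\ast}T$ for $[0,\varepsilon]$, which is nonzero, and the commutator enters through $\Vert TP_{\varepsilon}\Vert^{2}\le\varepsilon$ and $\Vert P_{\varepsilon}T\Vert^{2}\le\varepsilon+\Vert T^{\ast}T-TT^{\ast}\Vert$, giving $\Vert T-P_{\varepsilon}^{\perp}TP_{\varepsilon}^{\perp}\Vert\le\sqrt{\varepsilon}+\sqrt{\varepsilon+\Vert T^{\ast}T-TT^{\ast}\Vert}$. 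The compression has both defect projections dominating $P_{\varepsilon}$, hence nonzero and equivalent in a type III factor (and of the form $Q^{(\infty)}$, hence equivalent, after amplification in the I$_{\infty}$/II$_{\infty}$ factor case), so it is a norm limit of invertibles; letting $\varepsilon\to 0$ yields the constant-$1$ bound. The general (non-factor) algebra is then treated by a direct integral over factors with a measurable choice of invertibles $S_{\omega}$ realizing the factor bound up to $\varepsilon$, and the possible non-invertibility of $S=\int_{\Omega}^{\oplus}S_{\omega}\,d\mu(\omega)$ is repaired by writing $S=U|S|$ with $U$ unitary and approximating by $U(|S|+1/n)$. If you wish to keep your polar-decomposition route you must supply an argument of comparable force for the inequivalent-defect case and for algebras with center — as written, the crux is missing.
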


\begin{proof}
Note that $\left \Vert T^{\ast}T-TT^{\ast}\right \Vert ^{1/2}$ is unchanged if
$T$ is replaced with $T-\lambda$, $T^{\left(  \infty \right)  }$ or with
$T^{\ast}$. Hence it will suffice to show $dist\left(  T,GL\left(
\mathcal{A}\right)  \right)  \leq \left \Vert T^{\ast}T-TT^{\ast}\right \Vert
^{1/2}$ in part $\left(  1\right)  $ and $dist\left(  T^{\left(
\infty \right)  },GL\left(  \mathcal{A}\right)  \right)  \leq \left \Vert
T^{\ast}T-TT^{\ast}\right \Vert ^{1/2}$ in part $\left(  2\right)  .$ The parts
involving the distance to $\mathcal{N}_{f}\left(  \mathcal{A}\right)  $ follow
immediately from the Kachkovskiy-Safarov inequality \cite{KS} (see (\#)
above). If both $T^{\ast}T$ and $TT^{\ast}$ are invertible, then $T$ is
invertible and the desired inequalities are trivially true. Since nothing
changes when $T$ is replaced by $T^{\ast}$, there is no harm in assuming that
$T^{\ast}T$ is not invertible. Then, for every $\varepsilon>0$, if we let
$P_{\varepsilon}$ be the spectral projection for $T^{\ast}T$ corresponding to
the interval $\left[  0,\varepsilon \right]  ,$ we have $P_{\varepsilon}\neq0$,%
\[
\left \Vert TP_{\varepsilon}\right \Vert ^{2}=\left \Vert P_{\varepsilon}T^{\ast
}TP_{\varepsilon}\right \Vert \leq \varepsilon
\]
and%
\[
\left \Vert P_{\varepsilon}T\right \Vert ^{2}=\left \Vert P_{\varepsilon}%
TT^{\ast}P_{\varepsilon}\right \Vert \leq \left \Vert P_{\varepsilon}T^{\ast
}TP_{\varepsilon}\right \Vert +\left \Vert T^{\ast}T-TT^{\ast}\right \Vert .
\]
Hence $\left \Vert T-P_{\varepsilon}^{^{\bot}}TP_{\varepsilon}^{^{\bot}%
}\right \Vert \leq \sqrt{\varepsilon}+\sqrt{\varepsilon+\left \Vert T^{\ast
}T-TT^{\ast}\right \Vert }.$

We first consider the case when $\mathcal{A}$ is an infinite \emph{factor}. If
$\mathcal{A}$ is a type $III$ factor, then the projections onto $\ker \left(
P_{\varepsilon}^{^{\bot}}TP_{\varepsilon}^{^{\bot}}\right)  $ and $\ker \left(
P_{\varepsilon}^{^{\bot}}TP_{\varepsilon}^{^{\bot}}\right)  ^{\ast}$ are
nonzero and equivalent, which implies that $P_{\varepsilon}^{^{\bot}%
}TP_{\varepsilon}^{^{\bot}}$ is a norm limit of invertible elements \cite{B2}.
Thus, for every $\varepsilon>0$%
\[
dist\left(  T,GL\left(  \mathcal{A}\right)  \right)  \leq \sqrt{\varepsilon
}+\sqrt{\varepsilon+\left \Vert T^{\ast}T-TT^{\ast}\right \Vert },
\]
which implies%
\[
dist\left(  T,GL\left(  \mathcal{A}\right)  \right)  \leq \left \Vert T^{\ast
}T-TT^{\ast}\right \Vert ^{1/2}.
\]

If $\mathcal{A}$ is a type I$_{\infty}$ or type II$_{\infty}$ factor, then the
projections onto $\ker \left(  P_{\varepsilon}^{^{\bot}}TP_{\varepsilon
}^{^{\bot}}\right)  ^{\left(  \infty \right)  }$ and $\ker \left(  \left(
P_{\varepsilon}^{^{\bot}}TP_{\varepsilon}^{^{\bot}}\right)  ^{\ast}\right)
^{\left(  \infty \right)  }$ are nonzero and have the form $Q^{\left(
\infty \right)  }$ and are equivalent, and we get%
\[
dist\left(  T^{\left(  \infty \right)  },GL\left(  \mathcal{A}\right)  \right)
\leq \left \Vert T^{\ast}T-TT^{\ast}\right \Vert ^{1/2}.
\]
Hence we have proved statements $\left(  1\right)  $ and $\left(  2\right)  $
when $\mathcal{A}$ is an infinite factor von Neumann algebra on a separable
Hilbert space.

For the general case, using the central decomposition \cite{KR}, there is a
direct integral decomposition $\mathcal{A}=\int_{\Omega}^{\oplus}%
\mathcal{A}_{\omega}d\mu \left(  \omega \right)  $, where each $\mathcal{A}%
_{\omega}$ is a factor von Neumann algebra. If $\mathcal{A}$ has type III,
then each $\mathcal{A}_{\omega}$ is a type III factor. If $T=\int_{\Omega
}^{\oplus}T_{\omega}d\mu \left(  \omega \right)  \in \mathcal{A}$ and
$\varepsilon>0$, then, using standard measurable cross-section arguments, it
is easy to measurably choose, for each $\omega \in \Omega$, an invertible
operator $S_{\omega}\in \mathcal{A}_{\omega}$ such that%
\[
\left \Vert T_{\omega}-S_{\omega}\right \Vert \leq \left \Vert T_{\omega}^{\ast
}T_{\omega}-T_{\omega}T_{\omega}^{\ast}\right \Vert ^{1/2}+\varepsilon.
\]
Note that
\[
\left \Vert S_{\omega}\right \Vert \leq \left \Vert T_{\omega}-S_{\omega
}\right \Vert +\left \Vert T_{\omega}\right \Vert \leq \left \Vert T_{\omega}%
^{\ast}T_{\omega}-T_{\omega}T_{\omega}^{\ast}\right \Vert ^{1/2}+\varepsilon
+\left \Vert T_{\omega}\right \Vert \leq
\]%
\[
\left \Vert T^{\ast}T-TT^{\ast}\right \Vert ^{1/2}+\varepsilon+\left \Vert
T\right \Vert \text{ a.e.}\left(  \mu \right)  ,
\]
so $\int_{\Omega}^{\oplus}S_{\omega}d\mu \left(  \omega \right)  $ is defined.
Although each $S_{\omega}$ is invertible, the operator $S=\int_{\Omega
}^{\oplus}S_{\omega}d\mu \left(  \omega \right)  \in \mathcal{A}$ might not be
invertible. However, each $S_{\omega}$ has a polar decomposition $U_{\omega
}\left \vert S_{\omega}\right \vert $ with $U_{\omega}$ unitary. Thus
$S=U\left \vert S\right \vert $ with $U=\int_{\Omega}^{\oplus}U_{\omega}%
d\mu \left(  \omega \right)  $ unitary. Hence, $S$ is the limit of the sequence
$\left \{  U\left(  \left \vert S\right \vert +1/n\right)  \right \}  $ of
invertible operators. Thus,
\[
dist\left(  T,GL\left(  \mathcal{A}\right)  \right)  \leq \left \Vert
T-S\right \Vert \leq \left \Vert T^{\ast}T-TT^{\ast}\right \Vert ^{1/2}%
+\varepsilon \text{.}%
\]
Since $\varepsilon>0$ was arbitrary, we have the desired result. The case when
the $\mathcal{A}_{\omega}$'s are type I$_{\infty}$ or type II$_{\infty}$
factors is handled similarly.
\end{proof}

\begin{corollary}
If $H$ is a Hilbert space and $T\in B\left(  H\right)  $ and $C^{\ast}\left(
T\right)  \cap \mathcal{K}_{\aleph_{0}}\left(  H\right)  =\left \{  0\right \}
,$ then%
\[
d_{1}\left(  T\right)  \leq \left \Vert T^{\ast}T-TT^{\ast}\right \Vert ^{1/2}.
\]

\end{corollary}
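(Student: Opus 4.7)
The plan is to rerun the argument of Lemma~\ref{T infinity}(2) directly for $T \in B(H)$ (rather than for $T^{(\infty)}$), with the hypothesis $C^{\ast}(T) \cap \mathcal{K}_{\aleph_{0}}(H) = \{0\}$ playing the role that Murray--von Neumann equivalence inside an infinite factor played there. Since $B(H)$ is a type $\mathrm{I}_{\infty}$ factor, $GL_{0}(B(H)) = GL(B(H))$, and by the Olsen--Bouldin characterization an operator in $B(H)$ is a norm limit of invertibles if and only if it is invertible, Fredholm of index $0$, or non-Fredholm. The goal is, for each $\lambda \in \mathbb{C}$, to approximate $T - \lambda$ by such an operator, quantitatively to within $\|T^{\ast}T - TT^{\ast}\|^{1/2}$.

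I first treat the subcase in which the strengthened hypothesis $C^{\ast}(T, 1) \cap \mathcal{K}_{\aleph_{0}}(H) = \{0\}$ holds. Fix $\lambda$, set $S = T - \lambda$, and (passing to the adjoint if necessary) assume $S^{\ast}S$ is not invertible. Letting $P_{\varepsilon} = \chi_{[0,\varepsilon]}(S^{\ast}S)$, the computation from the proof of Lemma~\ref{T infinity} yields
\[
\|S - P_{\varepsilon}^{\perp} S P_{\varepsilon}^{\perp}\| \le \sqrt{\varepsilon} + \sqrt{\varepsilon + \|T^{\ast}T - TT^{\ast}\|}.
\]
The key claim is that $P_{\varepsilon}$ has infinite rank. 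If it were finite-rank, then $\sigma(S^{\ast}S) \cap [0,\varepsilon]$ would consist of finitely many eigenvalues of finite multiplicity; selecting a continuous bump $f$ on $\sigma(S^{\ast}S)$ isolating either a nonzero eigenvalue $\mu$ in this set (choosing $f(0) = 0$, so that $f(S^{\ast}S) \in C^{\ast}(S^{\ast}S) \subseteq C^{\ast}(T,1)$) or the origin itself in the case where it is the only spectral point below $\varepsilon$ (needing $f(0) = 1$, so that one works in $C^{\ast}(S^{\ast}S,1) \subseteq C^{\ast}(T,1)$), in either case produces a nonzero finite-rank projection in $C^{\ast}(T,1) \cap \mathcal{K}_{\aleph_{0}}(H)$, a contradiction. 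With $P_{\varepsilon}$ of infinite rank, $\ker(P_{\varepsilon}^{\perp} S P_{\varepsilon}^{\perp})$ and its adjoint kernel both contain the infinite-dimensional range of $P_{\varepsilon}$, so $P_{\varepsilon}^{\perp} S P_{\varepsilon}^{\perp}$ is non-Fredholm and therefore a norm limit of invertibles. Letting $\varepsilon \to 0$ and taking the supremum over $\lambda$ gives $d_{1}(T) \le \|T^{\ast}T - TT^{\ast}\|^{1/2}$.

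The remaining case $C^{\ast}(T,1) \cap \mathcal{K}_{\aleph_{0}}(H) \neq \{0\}$ I reduce to the above. Picking a nonzero compact element $\alpha \cdot 1 + B \in C^{\ast}(T,1)$ with $B \in C^{\ast}(T)$ and $\alpha \neq 0$ (forced by the assumption on $C^{\ast}(T)$), the identity $\pi(1) = -\pi(B)/\alpha$ places $\pi(1)$ in $\pi(C^{\ast}(T))$, and lifting via the isomorphism $\pi|_{C^{\ast}(T)}$ produces a projection $e \in C^{\ast}(T)$ with $1 - e$ compact, hence a projection of finite rank $n$. Then $e$ is the unit of $C^{\ast}(T)$ and $T = T_{1} \oplus 0_{n}$ with $T_{1} = T|_{eH}$; since $C^{\ast}(T_{1})$ is unital, $T_{1}$ falls under the previous subcase. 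Extending invertible approximants of $T_{1} - \lambda$ in $B(eH)$ by direct sum with $-\lambda I_{n}$ when $\lambda \neq 0$ (or with $\delta I_{n}$ for $\delta \to 0$ when $\lambda = 0$) gives $d_{1}(T) \le d_{1}(T_{1}) \le \|[T_{1},T_{1}^{\ast}]\|^{1/2} = \|T^{\ast}T - TT^{\ast}\|^{1/2}$. The main technical obstacle I expect is the infinite-rank claim for $P_{\varepsilon}$, in particular the subcase where $0$ is isolated in $\sigma(S^{\ast}S)$, which is precisely what forces the passage from $C^{\ast}(T)$ to $C^{\ast}(T,1)$ and hence the bad-case reduction via a finite-rank zero summand.
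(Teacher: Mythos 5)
Your argument is essentially correct when $H$ is separable, and there it is a genuinely different (and self-contained) route from the paper's: you never invoke Hadwin's approximate-equivalence machinery, and your unitization reduction ($T=T_1\oplus 0_n$ when $C^*(T,1)$ picks up a compact) is a legitimate way to handle the fact that $(T-\lambda)^*(T-\lambda)$ lives in $C^*(T,1)$ rather than $C^*(T)$. But the corollary is stated for an arbitrary Hilbert space, and in the nonseparable case --- which is the whole point of this paper --- your key step fails. The statement ``non-Fredholm, therefore a norm limit of invertibles'' is already misquoted in the separable case (the correct criterion is \emph{not semi-Fredholm with nonzero index}: a shift of infinite multiplicity is non-Fredholm but at distance $1$ from $GL$); what you actually use, namely ``kernel and cokernel both infinite-dimensional $\Rightarrow$ limit of invertibles,'' is true for separable $H$ but false for nonseparable $H$. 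For example, take $H=H_1\oplus H_2$ with $\dim H_1=\aleph_1$, $\dim H_2=\aleph_0$, and $A=W\oplus 0$ where $W$ is an isometry of $H_1$ onto a subspace of codimension $\aleph_1$: then $\ker A$ and $\ker A^*$ are both infinite-dimensional, yet $A$ is bounded away from $GL(B(H))$, as one sees by comparing $\mathrm{rank}\,\chi_{[0,t]}(A^*A)=\aleph_0$ with $\mathrm{rank}\,\chi_{[0,t]}(AA^*)=\aleph_1$ for small $t$ (these ranks can only shift by $\Vert\cdot\Vert$-small amounts in $t$ under perturbation, and they coincide for invertible operators). Your hypothesis $C^*(T)\cap\mathcal{K}_{\aleph_0}(H)=\{0\}$ only rules out \emph{finite} ranks; it does nothing to force the kernel and cokernel of the compressions $P_\varepsilon^{\perp}(T-\lambda)P_\varepsilon^{\perp}$ to have the \emph{same} infinite cardinal dimension, which is what an approximation-by-invertibles argument in nonseparable $B(H)$ requires (this is exactly the refinement in Bouldin \cite{B2}).

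So the gap is concrete: in the uncountable setting you must either verify the cardinal-matching condition of \cite{B2} for each $P_\varepsilon^{\perp}(T-\lambda)P_\varepsilon^{\perp}$, or sidestep it as the paper does. The paper's proof is a two-line reduction of a different kind: the hypothesis gives $\mathrm{rank}\,A=\mathrm{rank}\,A^{(\infty)}$ for every nonzero $A\in C^*(T)$ (all ranks are infinite), so by Hadwin's nonseparable approximate-equivalence theorem \cite[Thm.\ 3.14]{H} $T$ is approximately unitarily equivalent to $T^{(\infty)}$, whence $d_1(T)=d_1\bigl(T^{(\infty)}\bigr)$, and the bound then comes from Lemma~\ref{T infinity}. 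If you want to salvage your direct approach, you would need to replace ``both kernels are infinite-dimensional'' with an actual equivalence (equality of cardinals) of the two kernel projections, or with the quantitative rank-comparison of spectral projections of $S^*S$ and $SS^*$ sketched above --- and that comparison is no longer a soft consequence of the no-compacts hypothesis alone.
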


\begin{proof}
For every $0\neq A\in C^{\ast}\left(  T\right)  $ we have $rankA\geq \aleph
_{0},$ so $rankA=rankA^{\left(  \infty \right)  }.$ Thus, by \cite[Thm.
3.14]{H}, There is a sequence $\left \{  U_{n}\right \}  $ of unitary operators
in $B\left(  H\right)  $ such that $\left \Vert U_{n}^{\ast}T^{\left(
\infty \right)  }U_{n}-T\right \Vert \rightarrow0$, which means $d_{1}\left(
T\right)  =d_{1}\left(  T^{\left(  \infty \right)  }\right)  $.\bigskip
\end{proof}

We are now ready to prove our main theorem.

\begin{theorem}
Suppose $H$ is an infinite-dimensional Hilbert space and $\aleph_{0}<m\leq \dim
H$ is countably cofinal. If $t\in \mathcal{C}_{m}\left(  H\right)  $ is normal,
then there is a normal operator $T\in B\left(  H\right)  $ such that $\pi
_{m}\left(  T\right)  =t$.
\end{theorem}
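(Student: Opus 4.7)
The plan is to take any lift $S\in B(H)$ with $\pi_m(S)=t$ and modify it, by an element of $\mathcal{K}_m(H)$, to produce a normal operator. Since $t$ is normal, the self-commutator $K:=S^*S-SS^*$ lies in $\mathcal{K}_m(H)$, so by definition of $\mathcal{K}_m(H)$ as the norm closure of $\mathcal{F}_m(H)$, $K$ is a norm limit of operators of rank strictly less than $m$. Countable cofinality $m=\sup_n m_n$ with $m_n<m$ yields an increasing sequence of projections $P_1\le P_2\le\cdots$ in $\mathcal{F}_m(H)$ (of ranks $m_n$) with $P_n\to I$ strongly, and by passing to a subsequence I can arrange that $\|(I-P_n)K(I-P_n)\|\to 0$.

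The main tool is the Kachkovskiy--Safarov inequality packaged in Lemma~\ref{T infinity} and its Corollary. Applied in the corner $(I-P_n)B(H)(I-P_n)$, which is a type $I_\infty$ factor, to the compression $S_n:=(I-P_n)S(I-P_n)$ (or, to handle nonseparable corners, to an infinite amplification of it), this should produce a normal operator $N_n\in(I-P_n)B(H)(I-P_n)$ with finite spectrum satisfying
\[
\|N_n-S_n\|\le 2C\,\|[S_n,S_n^*]\|^{1/2}.
\]
One then builds $T$ by a diagonal/telescoping argument: since successive $N_n$ approximate the same restrictions of $S$ on the overlapping corners, a norm limit $T\in B(H)$ exists, is normal (normals are norm-closed), and satisfies $T-S\in\mathcal{K}_m(H)$, hence $\pi_m(T)=t$.

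The principal obstacle is that the commutator of the compression is not the compression of the commutator: a short calculation gives $[S_n,S_n^*]=(I-P_n)K(I-P_n)-(I-P_n)(SP_nS^*-S^*P_nS)(I-P_n)$, and the correction involves the off-diagonal pieces $(I-P_n)SP_n$ and $P_nS(I-P_n)$, which are not automatically small in norm even though they lie in $\mathcal{F}_m(H)$. Controlling this correction across the countable cofinal sequence -- so that the assembled $T$ stays within norm-distance of $S$ modulo $\mathcal{K}_m(H)$ rather than merely close in some weaker sense -- is where the countably cofinal structure of $m$ must do real work: one chooses the $P_n$ slowly enough that the off-diagonal contributions left over at stage $n$ can be absorbed by adjusting the lift on the fresh annulus $(P_{n+1}-P_n)H$ at stage $n+1$. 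A secondary subtlety is that these corners act on nonseparable spaces, so Lemma~\ref{T infinity} is accessed through its Corollary, whose hypothesis $C^{\ast}(T)\cap\mathcal{K}_{\aleph_{0}}(H)=\{0\}$ is arranged by perturbing $S$ at the outset without altering $\pi_m(S)$.
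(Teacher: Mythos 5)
Your corner-compression strategy does not go through as described, and the two difficulties you yourself flag are precisely where it breaks; no mechanism is offered to resolve them. First, the off-diagonal terms: for a fixed lift $S$ the blocks $(I-P_n)SP_n$ can have norm comparable to $\Vert S\Vert$ for \emph{every} admissible choice of the $P_n$ (think of a shift-type operator), so "choosing the $P_n$ slowly" and "absorbing the leftover on the fresh annulus" is not an argument but a restatement of the problem. Second, the assembly step: your $N_n$ live on the shrinking corners $(I-P_n)H$, so they are not a sequence in $B(H)$ converging in norm; any extension of $N_n$ by something on $P_nH$ need not be normal, and nothing forces the extensions to converge or the limit to differ from $S$ by an element of $\mathcal{K}_m(H)$. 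Third, and most seriously, the $d_1$ term in the Kachkovskiy--Safarov estimate: Lemma \ref{T infinity}(2) controls $d_1$ only for the infinite ampliation $T^{(\infty)}$, and the Corollary needs $C^{\ast}(T)\cap\mathcal{K}_{\aleph_0}(H)=\{0\}$ exactly so that $T$ is approximately unitarily equivalent to $T^{(\infty)}$. A compression $S_n=(I-P_n)S(I-P_n)$ is in general not (even approximately) unitarily equivalent to $S_n^{(\infty)}$, and a single perturbation of $S$ within $\mathcal{K}_m(H)$ cannot arrange the Corollary's hypothesis for all corner compressions simultaneously; this is precisely where an index-type obstruction could hide, so the estimate $\Vert N_n-S_n\Vert\leq 2C\Vert [S_n,S_n^{\ast}]\Vert^{1/2}$ you want in the corner is not available. (A minor further point: if $\dim H>m$, no sequence of projections of rank $<m$ converges strongly to $I$.)

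The ingredient your proposal is missing is a structural decomposition of the lift, which is how the paper handles all three issues at once. After reducing to $\dim H=m$ via a decomposition of $H$ into separable reducing subspaces, the paper invokes Hadwin's nonseparable approximate equivalence theorem \cite[Cor. 3.11, Thm. 4.6]{H} to replace $S$, modulo $\mathcal{K}_m(H)$, by a countable direct sum $\sum_{n}^{\oplus}A_n^{(k_n)}$ of irreducible operators on separable spaces with multiplicities. Blocks with $k_n=m$ are automatically normal; blocks of total dimension $<m$ can be discarded; for the remaining blocks countable cofinality gives $k_{n_j}\uparrow m$, and since the self-commutator of $S$ lies in $\mathcal{K}_m(H)$ and occurs with multiplicity $k_{n_j}$ in the $j$-th block, $\Vert A_{n_j}^{\ast}A_{n_j}-A_{n_j}A_{n_j}^{\ast}\Vert\rightarrow 0$. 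Because $k_{n_j}\geq\aleph_0$, one has $A_{n_j}^{(k_{n_j})}\cong\bigl(A_{n_j}^{(\infty)}\bigr)^{(k_{n_j})}$, so Lemma \ref{T infinity}(2) kills the $d_1$ term and Kachkovskiy--Safarov produces normals $B_j$ with $\Vert A_{n_j}^{(k_{n_j})}-B_j\Vert\rightarrow 0$; since each block acts on a space of dimension $<m$, the blockwise perturbation lies in $\mathcal{K}_m(H)$ and the direct sum of the $B_j$ (together with the normal part) is the desired normal lift. It is this block structure, not a choice of corners, that simultaneously controls the commutator, eliminates the $d_1$/index obstruction, and keeps the perturbation inside the ideal; without a substitute for it your outline cannot be completed.
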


\begin{proof}
We first choose an $S\in B\left(  H\right)  $ such that $\pi_{m}\left(
S\right)  =t$. We can write $H$ as a direct sum $H=\sum_{j\in J}^{\oplus}%
H_{j}$ with each dim$H_{j}=\aleph_{0}$ and with $H_{j}$ a reducing subspace
for $S$. Hence, we can write $S=\sum_{j\in J}^{\oplus}S_{j}$ with respect to
this decomposition. Since $S^{\ast}S-SS^{\ast}=\sum_{j\in J}^{\oplus}\left(
S_{j}^{\ast}S_{j}-S_{j}S_{j}^{\ast}\right)  \in \mathcal{K}_{m}\left(
H\right)  $, we see that $E=\left \{  j\in J:S_{j}^{\ast}S_{j}-S_{j}S_{j}%
^{\ast}\neq0\right \}  $ has cardinality at most $m$ and $S_{j}$ is normal when
$j\notin E$. If $CardE<m$, then $T=\sum_{j\in J}^{\oplus}S_{j}\chi_{E}\left(
j\right)  $ is normal and $\pi_{m}\left(  T\right)  =\pi_{m}\left(  S\right)
=t.$ Hence we can assume $J=E,$ so $\dim H=CardE=m$.

It follows from \cite[Cor. 3.11, Thm. 4.6]{H} that there is a unitary operator
$U\in B\left(  H\right)  $ and irreducible operators $A_{1},A_{2},\ldots$ and
cardinals $k_{1},k_{2},\ldots$ such that
\[
U^{\ast}SU-\sum \nolimits_{1\leq n<\infty}^{\oplus}A_{n}^{\left(  k_{n}\right)
}\in \mathcal{K}_{m}\left(  H\right)  .
\]
Hence we can assume that $S=\sum \nolimits_{1\leq n<\infty}^{\oplus}%
A_{n}^{\left(  k_{n}\right)  }$. Since each $A_{n}$ is irreducible, it must
act on a separable Hilbert space. If $k_{n}=m,$ then $A_{n}$ must be normal.
Hence we can write $S=N\oplus \sum \nolimits_{n\in F}^{\oplus}A_{n}^{\left(
k_{n}\right)  }$ , where $k_{n}<m$ whenever $n\in F$. If $F$ is finite,
$\pi_{m}\left(  N\oplus0\right)  =\pi_{m}\left(  S\right)  =t.$ Hence, we can
assume $F=\left \{  n_{1},n_{2},\ldots \right \}  $ with $\aleph_{0}\leq
k_{n_{1}}\leq k_{n_{2}}\leq \cdots<m$ and $m=\sup \left \{  k_{n_{j}}%
:j\in \mathbb{N}\right \}  $. It follows now that
\[
\lim_{j\rightarrow \infty}\left \Vert A_{n_{j}}^{\ast}A_{n_{j}}-A_{n_{j}%
}A_{n_{j}}^{\ast}\right \Vert =0.
\]
However, $A_{j}^{\left(  k_{n_{j}}\right)  }$ is unitarily equivalent to
$\left(  A_{j}^{\left(  \infty \right)  }\right)  ^{\left(  k_{n_{j}}\right)
}$ for each $j,$ which implies by Lemma \ref{T infinity},%
\[
\lim_{j\rightarrow \infty}dist\left(  A_{j}^{\left(  k_{n_{j}}\right)
},\mathcal{N}\right)  =0.
\]
Hence there is a sequence $\left \{  B_{j}\right \}  $ of normal operators such
that
\[
\lim_{j\rightarrow \infty}\left \Vert A_{j}^{\left(  k_{n_{j}}\right)  }%
-B_{j}\right \Vert =0.
\]
Hence $T=N\oplus \sum_{1\leq j<\infty}^{\oplus}B_{j}$ is normal and $\pi
_{m}\left(  T\right)  =\pi_{m}\left(  S\right)  =t$.
\end{proof}

\begin{remark}
If we suppose $m$ is not countably cofinal in the preceding proof, we see that
$CardE$ must be less than $m$ and the proof is complete.
\end{remark}

\end{document}